\def\v{\vskip}
\theoremstyle{plain}
\newtheorem{dl}{Theorem}[section]
\newtheorem{md}[dl]{Proposition}
\newtheorem{dn}[dl]{Definition}
\newcommand{\tx}{\otimes }
\newcommand{\ts}{\oplus}
\newcommand{\Ah}{\frak{A}}
\newcommand{\Lh}{\frak{L}}
\newcommand{\Rh}{\frak{R}}
\newcommand{\Ra}{\widehat{R}}
\newcommand{\Hh}{\mathcal{H}}
\newcommand{\A}{\mathcal{A}}
\newcommand{\Fm}{\widetilde{F}}
\newcommand{\La}{\widehat{L}}
\newcommand{\Fh}{\Acute {F}}
\newcommand{\Fc}{\Breve {F}}
\newcommand{\Lc}{\Breve {L}}
\begin{document}
\title{ THE COHERENCE THEOREM FOR ANN-CATEGORIES}
\author{Nguyen Tien Quang}
\pagestyle{myheadings} 
\markboth{The coherence theorem for Ann-categories}{Nguyen Tien Quang}
\maketitle
\setcounter{tocdepth}{1}


\begin{abstract}
\noindent This paper\footnote{This paper has been published (in Vietnamese) in Vietnam Journal of Mathematics Vol. XVI, No 1, 1988.} presents the proof of the coherence theorem for Ann-categories whose set of axioms and original basic properties were given in [9]. Let 
$$\A=(\A,{\Ah},c,(0,g,d),a,(1,l,r),{\Lh},{\Rh})$$ be an Ann-category. The coherence theorem  states that in the category $ \A$, any morphism built from the above isomorphisms and the identification by composition and the two operations $\tx$, $\ts$ only depends on its source and its target.\\
The first coherence theorems were built for monoidal and symmetric monoidal categories by Mac Lane [7]. After that, as shown in the References, there are many results relating to the coherence problem  for certain classes of categories.\\
For Ann-categories, applying Hoang Xuan Sinh's ideas used for Gr-categories in [2], the proof of the coherence theorem is constructed by faithfully ``embedding'' each arbitrary Ann-category into a quite strict Ann-category. Here, a {\it quite strict} Ann-categogy is an Ann-category whose all constraints are strict, except for the commutativity and left distributivity ones.\\
\noindent This paper is the work continuing from [9]. If there is no explanation, the terminologies and notations in this paper mean as in [9].
\end{abstract}
\v 0.5cm
\section {Canonical isomorphisms}
In this section, we define some canonical isomorphisms induced by isomorphisms 
$c,  \Lh,  \widehat{L}^A $ and the identification, laws $\tx, \ts$ on the quite strict Ann-category $\A.$\\
Let $I$ be a fully ordered limited set. If $I \not=\emptyset$ and $\alpha$ is the maximal of $I,$ we will denote $I' = I\setminus  \{ \alpha\};$ and the notation $|I|$ refers to the cardinal of $I.$

\begin{dn}$[1]$
The canonical sum $\sum\limits_I A_{i}$ where $A_{i} \in Ob\A, i \in I $ is defined inductively as follows \\
1. $\sum\limits_I  A_i = 0$ if $I = \emptyset$ and $\sum\limits_I A_{i} = A_{\alpha}$ if $I=\{\alpha\}$.\\
2. $\sum\limits_IA_{i}=(\sum\limits_{I'} A_i \oplus A_{\alpha}) $ if $| I | > 1$.
\end{dn}
\begin{dn}
We define the isomorphism 
$$ \nu_{\sum A_{i} , \sum B_{i}}: (\sum\limits_I A_{i}) \oplus (\sum\limits_I B_{i} ) \longrightarrow \sum\limits_I (A_{i}\oplus B_{i}),$$
(which is abbreviated by $\nu_{I} $) by induction on $|I|$ as follows\\
1. $\nu_{I} = id$   if  $| I | \leq 1$.\\
2. if $|I|>2,$ $\nu_{I}$ is defined by the folllowing commutative diagram

\[\begin{CD}
(\sum\limits_I A_i)\ts (\sum\limits_I B_i)@>id>> (\sum\limits_{I'} A_i)\ts A_{\alpha} \ts (\sum\limits_{I'} B_i) \ts B_{\alpha}\\
@V \nu_I VV   @VV \nu V  \\
\sum\limits_I (A_i\ts B_i) @> \nu_{I'} \ts id_{A_{\alpha}\ts B_{\alpha}} >> (\sum\limits_{I'} A_i)\ts (\sum\limits_{I'} B_i)\ts A_{\alpha} \ts B_{\alpha}
\end{CD}\]
where $\nu = id \oplus c \oplus id$. We can see that the isomorphism $\nu_{I}$ is built only from isomorphisms $c, id$ by law $\oplus$. Moreover, the isomorphism $\nu_{I}$ is natural.
\end{dn}
\begin{dn}
We define following isomorphisms
 $$ u_{I, J}:\sum\limits_I \sum_{J}(A_{i}\otimes B_{j})\longrightarrow \sum_{J} \sum\limits_I(A_{i}\otimes B_{j})$$
by induction on $| I |$ as follows\\
1. $ u_{I, J} = id $ if $| I | \leq 1$ or $ J = \emptyset $.\\
2. $ u_{I, J} = \nu_{J}(u_{I', J}\oplus id)$ if $| I | > 1$.\\
So, isomorphisms $ u_{I, J}$ are also built from the isomorphisms $c, id$ by law $\oplus$ and these morphisms are functorial.
\end{dn}
\begin{dn}
We define following isomorphisms
$$ F_{I, J}: (\sum\limits_IA_{i})\otimes (\sum_{J} B_{j})\longrightarrow \sum\limits_I \sum_{J}(A_{i}\otimes B_{j}) = \sum_{I \times J}(A_{i}\otimes B_{j})$$
where $I\times J$  is odered alphabetically as follows\\
1. $ F_{I, J} = id : 0 \otimes (\sum_{J} B_{i})\rightarrow 0$ if $I= \emptyset.$\\
\noindent (since $\Rh = id$ we have $\widehat{R}^A=id$ for all $A \in Ob\A$)\\ 
2. $ F_{I, J} = \La^X : X \otimes 0 \rightarrow 0$ where $ X = \sum\limits_I A_{i}$ if $ J = \emptyset$.\\
3. $ F_{I, J} = \sum\limits_I f_{A_{i}}$ if $ I \not= \emptyset$ and $ J \not= \emptyset,$ where\\
$$ f_{A} : A \otimes (\sum_{J} B_{j}) \rightarrow \sum_{J}A\otimes B_{j}$$
is defined as follows: If  $|J|=1,$ $f_{A}=id;$ whereas $f_{A}$ is defined by induction on $|J|$ by the following commutative diagram
\[\begin{CD}
 A\tx (\sum\limits_J B_j)@>f_A>> (\sum\limits_{J} (A\tx B_j)\\
@V \Lc^A VV                    @|  \\
A\tx (\sum\limits_J B_j)\ts (A\tx B_\beta) @>f'_A \tx id >> (\sum\limits_{J'} A\tx B_j)\ts (A\tx B_\beta)
\end{CD}\]
where $\beta$ is the maximal element of $ J$ and $ J' =J\setminus \{\beta\}$.
\end{dn}
\begin{dn}
We define following isomorphisms
$$ K_{I, J}: (\sum\limits_IA_{i})\otimes (\sum_{J}B_{j})\longrightarrow \sum\limits_J \sum_I(A_{i}\otimes B_{j}) $$
as follows\\
1. $ K_{I, J} = id$  if $ I = \emptyset$.\\
2. $ K_{I, J} = \La^X,$ where $ X = \sum\limits_I A_{i}$ if $ J = \emptyset$.\\
3. $ K_{I, J} = f_X,$ where $ X = \sum\limits_I A_{i}$ in other cases.
\end{dn}
Then, we have the following proposition immediately
\begin{md}
With canonical sums $\sum\limits_I A_i, \sum\limits_J A_j$, we have the relation
$$K_{I,J}=u_{I,J}\cdot F_{I,J}.$$
\end{md}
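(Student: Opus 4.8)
The plan is to peel the statement down to a single identity about the morphisms $f_A$ and then run an induction on $|I|$. First I would check that $u_{I,J}\cdot F_{I,J}$ is well typed and shares source and target with $K_{I,J}$: the map $F_{I,J}$ lands in $\sum_I\sum_J(A_i\tx B_j)$ and $u_{I,J}$ moves this to $\sum_J\sum_I(A_i\tx B_j)$. Since the right distributivity constraint is strict, $\Rh=id$, we have the strict identifications $(\sum_I A_i)\tx C=\sum_I(A_i\tx C)$; under these, case 3 of the definitions reads $K_{I,J}=f_X$ and $F_{I,J}=\sum_I f_{A_i}$ with $X=\sum_I A_i$. So the proposition is equivalent to
$$f_{\sum_I A_i}=u_{I,J}\cdot\Big(\sum_I f_{A_i}\Big).$$
The base cases $|I|\le 1$ and $J=\emptyset$ are immediate: in each of them $u_{I,J}=id$, and the two sides reduce simultaneously to $f_{A_\alpha}$, to $id$, or to $\La^X$.

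For the inductive step I would assume $|I|>1$ and $J\neq\emptyset$ and write $\sum_I A_i=Y\ts A_\alpha$ with $Y=\sum_{I'}A_i$. Unwinding the recursions gives $u_{I,J}=\nu_J\cdot(u_{I',J}\ts id)$ and $F_{I,J}=F_{I',J}\ts f_{A_\alpha}$, so by the interchange law for $\ts$,
$$u_{I,J}\cdot F_{I,J}=\nu_J\cdot\big((u_{I',J}\cdot F_{I',J})\ts f_{A_\alpha}\big).$$
Feeding in the induction hypothesis $u_{I',J}\cdot F_{I',J}=K_{I',J}=f_Y$ turns the right-hand side into $\nu_J\cdot(f_Y\ts f_{A_\alpha})$, while the left-hand side is $K_{I,J}=f_{Y\ts A_\alpha}$. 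Hence everything reduces to the single claim that $f$ is additive in its first argument,
$$f_{Y\ts A_\alpha}=\nu_J\cdot(f_Y\ts f_{A_\alpha}).$$

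This additivity is the crux, and I would establish it by a second induction, this time on $|J|$. When $|J|=1$ both $f$ and $\nu_J$ are identities, so there is nothing to check. For $|J|>1$, writing $J=J'\cup\{\beta\}$, I would expand each occurrence of $f$ through its defining commutative diagram $f_A=(f'_A\ts id)\cdot\Lc^A$, substitute the induction hypothesis for $f'_{Y\ts A_\alpha}$, and expand $\nu_J$ through its own recursion. The remaining work is a diagram chase comparing $\Lc^{Y\ts A_\alpha}$ with $\Lc^Y\ts\Lc^{A_\alpha}$. I expect this to be the main obstacle: under the strict identification $\Rh=id$ these two factorizations of the left distributivity differ precisely by a block transposition, and closing the diagram requires the naturality of $\Lc$ together with the commutativity swaps built into $\nu_J$ — that is, it is exactly here that the compatibility axiom relating $\Lc$, $\Rh$ and $c$ in an Ann-category has to be used, rather than any formal manipulation of the inductions themselves.
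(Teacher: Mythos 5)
Your argument is correct and is essentially the intended one: the paper states this proposition with no proof at all (``we have the following proposition immediately''), and your double induction supplies exactly the suppressed computation, correctly isolating the additivity $f_{Y\ts A_\alpha}=\nu_J\cdot(f_Y\ts f_{A_\alpha})$ as the one step that is not a purely formal consequence of the recursive definitions. Two small remarks on your final paragraph: the step that slides the blocks $f'_{Y}$ and $f'_{A_\alpha}$ past the middle transposition inside $\nu_J$ uses the naturality of $c$ rather than of $\Lc$; what then remains is precisely the Ann-category axiom comparing the two orders of distributing over $(Y\ts A_\alpha)\tx(S'\ts B_\beta)$, which under $\Rh=id$ reads $\Lc^{Y\ts A_\alpha}=(id\ts c\ts id)\circ(\Lc^{Y}\ts\Lc^{A_\alpha})$ --- so your identification of the crux is accurate.
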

Applying this proposition we can prove 	 
\begin{md}
Assume $J_1, J_2$ be non-empty subsets of $J$ such that $J=J_1 \coprod J_2$ and $j_1<j_2$ if $j_1 \in J_1, j_2 \in J_2$. Then for sums $A= \sum\limits_I A_i$, $\sum\limits_J B_j$, $\sum\limits_{J_1} B_j$, $\sum\limits_{J_2} B_j$ we have following relations
\begin{align*}
F_{I,J}&=\nu_I \cdot (F_{I,J_1}\ts F_{I,J_2})\cdot \Lc^A\\
F_{J,I}&=F_{J_1,I}\ts F_{J_2,I}.
\end{align*}
\end{md}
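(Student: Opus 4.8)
\emph{Plan.} I would prove both identities by induction, reading off the recursive definitions $F_{I,J}=\sum_I f_{A_i}$, of $f_A$, and of $\nu_I$, and exploiting the quite-strict hypotheses throughout. Strict associativity of $\ts$ gives the literal object-equalities $\sum_J B_j=(\sum_{J_1}B_j)\ts(\sum_{J_2}B_j)$ and $\sum_J\sum_I(A_i\tx B_j)=\big(\sum_{J_1}\sum_I(A_i\tx B_j)\big)\ts\big(\sum_{J_2}\sum_I(A_i\tx B_j)\big)$, while strictness of $\Rh$ gives $(\sum_I A_i)\tx Z=\sum_I(A_i\tx Z)$ and $(X\ts Y)\tx Z=(X\tx Z)\ts(Y\tx Z)$. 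With these identifications every composite written in the statement is already well typed, so only the equalities of morphisms remain to be checked.

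\emph{The second identity.} Here I would argue directly. By case 3 of the definition, $F_{J,I}=\sum_J f_{B_j}$, the canonical sum of the morphisms $f_{B_j}\colon B_j\tx(\sum_I A_i)\to\sum_I(B_j\tx A_i)$. Since $J=J_1\coprod J_2$ with every element of $J_1$ below every element of $J_2$, peeling off maximal elements never leaves $J_2$ until $J_2$ is exhausted; hence a short induction on $|J_2|$, using only strict associativity of $\ts$ and functoriality of the canonical sum, yields $\sum_J f_{B_j}=(\sum_{J_1}f_{B_j})\ts(\sum_{J_2}f_{B_j})$. Identifying the sources via strict $\Rh$, the right-hand side is exactly $F_{J_1,I}\ts F_{J_2,I}$. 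No distributivity or commutativity constraint intervenes, which is why this identity carries neither $\nu$ nor $\Lc$.

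\emph{The first identity.} This is the substantial case, which I would split into a one-factor lemma followed by propagation through $I$. The lemma asserts that for a single object $A$,
\[
f_A=(f_A^{J_1}\ts f_A^{J_2})\cdot\Lc^A ,
\]
where $f_A^{J_k}\colon A\tx(\sum_{J_k}B_j)\to\sum_{J_k}(A\tx B_j)$ and $\Lc^A$ distributes $A$ over $(\sum_{J_1}B_j)\ts(\sum_{J_2}B_j)$. I would prove it by induction on $|J_2|$: when $|J_2|=1$ the maximal element of $J$ is the unique element of $J_2$, so $f_A^{J_2}=id$ and the asserted equality is precisely the commutative square defining $f_A$; the inductive step peels off $\beta=\max J=\max J_2$ and invokes the coherence axiom of the Ann-category expressing compatibility of the left distributivity constraint with the associativity of $\ts$ (in the quite-strict form $(id\ts\Lc^A)\cdot\Lc^A=(\Lc^A\ts id)\cdot\Lc^A$ on $A\tx(X\ts Y\ts Z)$), together with naturality of $\Lc^A=\Lh$. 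Applying the canonical sum $\sum_I$ to this lemma and using functoriality gives $F_{I,J}=\sum_I f_{A_i}=\big[\sum_I(f_{A_i}^{J_1}\ts f_{A_i}^{J_2})\big]\cdot\big[\sum_I\Lc^{A_i}\big]$, and two facts then finish the computation: (i) $\sum_I\Lc^{A_i}=\nu_I\cdot\Lc^A$ with $A=\sum_I A_i$, proved by induction on $|I|$ from the Ann-category axiom relating $\Lh^{A'\ts A''}$ to $\Lh^{A'}\ts\Lh^{A''}$ and $c$ (with $\Rh$ strict, the required middle interchange is exactly the map $\nu$ from the definition of $\nu_I$); and (ii) naturality of $\nu_I$, which yields $\big[\sum_I(f_{A_i}^{J_1}\ts f_{A_i}^{J_2})\big]\cdot\nu_I=\nu_I\cdot(F_{I,J_1}\ts F_{I,J_2})$. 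Substituting (i) and sliding $\nu_I$ past the factor by (ii) gives $F_{I,J}=\nu_I\cdot(F_{I,J_1}\ts F_{I,J_2})\cdot\Lc^A$. One may alternatively route (i) and the lemma through the relation $K_{I,J}=u_{I,J}\cdot F_{I,J}$ of the preceding proposition.

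\emph{Main obstacle.} The inductions themselves are routine; the real work is isolating the two Ann-category axioms invoked above—the compatibility of $\Lc$ with $\ts$-associativity in the one-factor lemma, and the $\Lh^{A'\ts A''}=\nu\cdot(\Lh^{A'}\ts\Lh^{A''})$ relation behind (i)—and verifying that, under the strictness of $\Rh$, of associativity and of the units, every object occurring in these composites coincides on the nose, so that the relevant diagrams commute genuinely rather than only up to further constraints.
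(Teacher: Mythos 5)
Your argument is correct, and it is essentially a complete proof where the paper gives almost none: the paper disposes of this proposition in one line (``Applying this proposition we can prove\dots''), i.e.\ it intends the identities to be extracted from Proposition 1.6, $K_{I,J}=u_{I,J}\cdot F_{I,J}$, together with the recursive definitions, and reserves its detailed diagram-chasing for the next proposition. Your route is a direct double induction instead: the one-factor lemma $f_A=(f_A^{J_1}\ts f_A^{J_2})\cdot\Lc^A$ by induction on $|J_2|$ (base case being literally the defining square of $f_A$, inductive step being the strict form of the axiom relating $\Lc$ to $\ts$-associativity), followed by summation over $I$ using $\sum_I\Lc^{A_i}=\nu_I\cdot\Lc^{A}$ and naturality of $\nu_I$; the second identity falls out of strict associativity of $\ts$ and strictness of $\Rh$ alone. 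This is a legitimately different and more self-contained decomposition: it isolates exactly the two Ann-category axioms doing the work, whereas the paper's implicit route via $K_{I,J}=u_{I,J}F_{I,J}$ pushes the same content into the already-established behaviour of $u_{I,J}$ and $\nu_J$ under concatenation of index sets. You correctly note the alternative route yourself. Two small points you should still cover for completeness: the degenerate cases $I=\emptyset$ (where $F_{I,J}$ and $F_{J,I}$ are defined by clauses 1 and 2 of Definition 1.4 rather than as $\sum f$, and the identities reduce to $\La^{X\ts Y}=\La^{X}\ts\La^{Y}$ and properties of the zero object, exactly as the paper handles the empty cases in the proof of Proposition 1.8), and a one-line check that $\nu_I$ restricted to identity components is the identity when $|I|\le 1$, so that your induction in (i) starts correctly.
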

We will give the proof of this proposition in detail to illustrate the proof using commutative diagrams. Hereafter, for convenience, we write $AB$ instead of $A\tx B$ for all $A,B\in Obs(\A).$\\
\begin{md}
In the Ann-category $\A,$ the following diagrams 
\[\begin{CD}
 (\sum\limits_I A_i)\tx (\sum\limits_J B_j)\tx(\sum\limits_T C_t)@>F_{I,J}\tx id>> (\sum\limits_{I\times J} A_i\tx B_j)\tx(\sum\limits_T C_t)\\
@V id\tx F_{J,T}  VV   @VV F_{I\times J,T} V  \\
(\sum\limits_I A_i)\tx  (\sum\limits_{J\times T} B_j \tx C_t) @>>F_{I,J\times T}>\sum\limits_{I\times J\times T} A_i\tx B_j \tx C_t
\end{CD}\]
commute.
\end{md}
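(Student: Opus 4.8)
The plan is to prove $F_{I\times J,T}\cdot(F_{I,J}\tx id)=F_{I,J\times T}\cdot(id\tx F_{J,T})$ by successive inductions on $|I|$, $|J|$ and $|T|$, using the two splitting relations of the preceding proposition to peel off summands, transporting the pieces by the naturality of the constraints $\Lc$ and $\La$, and isolating at the very end a single instance of a structural axiom of $\A$. Informally the identity says that the two ways of fully ``multiplying out'' the triple product of canonical sums agree; because the right distributivity $\Rh$ is strict, most of the apparatus will cancel and only the non-strict constraint $\Lc$ will require genuine attention. Throughout I abbreviate $A=A_i$ once $|I|$ has been reduced to $1$.

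First I would dispose of the degenerate cases. If $I=\emptyset$ then, as $\Rh=id$, every vertex of the square is $0$ and all four maps are identities, so it commutes trivially. If $J=\emptyset$ or $T=\emptyset$, the affected $F$'s collapse to instances of $\La^X\colon X\tx 0\to 0$, and the square becomes a compatibility between two such constraints (one tensored by an identity), which is delivered by the naturality of $\La$ together with the strictness of $\Rh$. Assuming now $I,J,T$ non-empty, I reduce $|I|$ to $1$: splitting $I=I_1\coprod I_2$ and applying the relation $F_{J,I}=F_{J_1,I}\ts F_{J_2,I}$ in its first-variable form to all four maps — which introduces no constraint because $\Rh$ is strict, and which respects the alphabetical splitting $I\times J=(I_1\times J)\coprod(I_2\times J)$ — presents the square as the direct sum of the squares for $I_1$ and $I_2$. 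Hence I may take $|I|=1$, whereupon every $\nu_I$ degenerates to the identity. I then reduce $|J|$ to $1$ in the same spirit: the first-variable splits handle $F_{J,T}$ and $F_{I\times J,T}$ freely, while the relation $F_{I,J}=(F_{I,J_1}\ts F_{I,J_2})\cdot\Lc^{A}$ and its analogue for $F_{I,J\times T}$ produce a factor $\Lc^{A}$ on each path; the step is completed by checking, via the naturality of $\Lc^{A}$, that these two factors are identified, after which the square becomes the direct sum of the squares for $J_1$ and $J_2$.

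This leaves the residual case $|I|=|J|=1$ with $T$ arbitrary, in which $F_{I,J}=id$ and, writing $B=B_j$, the square collapses to the single identity $f_{A\tx B}=f_{A}\cdot(id_{A}\tx f_{B})$ between distributivity maps over $\sum\limits_{T}$. I would prove this by induction on $|T|$, unwinding the recursive definition of $f$: the step splits off the maximal element of $T$ through $\Lc$ and appeals to the inductive hypothesis together with the naturality of $\Lc$, while the base case $|T|=2$ is exactly the axiom of the Ann-category $\A$ expressing the compatibility of the left distributivity constraint with the associativity of $\tx$. \emph{Verifying that this fully reduced diagram is literally that axiom is the main obstacle}: it requires unwinding the recursion defining $f$ and cancelling all the strict identifications (those coming from $\Rh$, from the associativity and unit constraints, and from the alphabetical orderings of $I\times J$ and $J\times T$) so that precisely one instance of the distributivity axiom remains. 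Once this atomic identity is established, the three inductions close and the square commutes for all $I,J,T$.
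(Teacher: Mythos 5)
Your proposal is correct and follows essentially the same route as the paper: degenerate cases via strictness of $\Rh$ and the zero object, reduction through the splitting relations of the preceding proposition together with naturality of $\Lc$, and a final induction on $|T|$ whose base case $|T|=2$ is exactly axiom (1.1) of Ann-categories. The only difference is organizational — you peel off $|I|$ and $|J|$ first and leave the $|T|$-induction for last, while the paper runs the inductions inside-out ($|T|$, then $|J|$, then $|I|$) — but the ingredients and the atomic identity $f_{A\tx B}=f_A\cdot(id_A\tx f_B)$ you isolate are the same.
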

\vspace{-1cm}
\begin{proof}
1. In case $I=\emptyset,$ we have the proposition proved since the diagram (1.1) becomes the following one
\[\begin{CD}
 0 (\sum\limits_J B_j)(\sum\limits_T C_t)@>id\tx id>> 0(\sum\limits_T C_t)\\
@V id\tx F_{J,T}  VV   @VV id V  \\
0 (\sum\limits_{J\times T} B_j \tx C_t) @>>id >0
\end{CD}\]
whose commutativity follows from $\Ra^X=id$ and the property of the zero object (see Prop.3.2 [9]). In case $J=\emptyset$ or $T=\emptyset$ the proposition is proved similarly. Hence, we now can suppose that $I,J,T$ are all not empty.\\
2. In case $| I |=1$. Firstly, consider the case in which $|J|=1.$ We prove the proposition by induction on $|T|.$ Then\\
\indent If $|T|=1,$ the proof is obvious.\\
\indent If $|T|=2,$ the diagram commutes thanks to the axiom (1.1) of Ann-categories (see [9]).\\
\indent If $|T|>2,$ consider the diagram (1.2). 
\begin{center}
\setlength{\unitlength}{1cm}
\begin{picture}(15,9)

\put(4.5,1){$A( \sum\limits_{T'}BC_t \ts BC_\gamma)$}
\put(8.6,1){$A( \sum\limits_{T'}BC_t) \ts ABC_\gamma$}
\put(4.5,3){$A( \sum\limits_{T'}BC_t \ts BC_\gamma)$}
\put(8.4,3){$\sum\limits_{T'}ABC_t \ts ABC_\gamma$}
\put(5,5){$AB (\sum\limits_{T}C_t )$}
\put(8.2,5){$\sum\limits_{T'}ABC_t \ts ABC_\gamma$}
\put(5,6.5){$AB (\sum\limits_{T}C_t )$}
\put(8.2,6.5){$AB(\sum\limits_{T'}C_t)\ts ABC_\gamma$}
\put(4,8){$A(B\sum\limits_{T'}C_t\ts BC_\gamma)$}
\put(8.2,8){$AB (\sum\limits_{T}C_t )\ts ABC_\gamma$}

\put(2.3,4.2){$id\tx (f'_B \ts id)$}
\put(5.8,1.9){$id$}
\put(5.8,3.9){$id\tx f_B$}
\put(5.8,5.8){$id$}
\put(5.8,7.3){$id \tx \Lc^B$}
\put(7.7,1.3){$\Lc^A$}
\put(7.5,3.3){$f_A$}
\put(7,5.2){$f_{AB}$}
\put(7,6.7){$\Lc^{AB}$}
\put(7.2,8.3){$\Lc^A$}
\put(9.6,1.9){$f'_A\ts id$}
\put(9.6,3.9){$id$}
\put(9.6,5.8){$f_{AB}\ts id$}
\put(9.6,7.3){$id$}
\put(11.5,4.2){$(id\tx f'_B) \ts id$}

\put(5.6,1.5){\vector(0,1){1}}
\put(5.6,4.5){\vector(0,-1){1}}
\put(5.6,6.1){\vector(0,-1){0.6}}
\put(5.6,7.8){\vector(0,-1){0.8}}
\put(9.5,1.5){\vector(0,1){1}}
\put(9.5,4.5){\vector(0,-1){1}}
\put(9.5,6.3){\vector(0,-1){1}}
\put(9.5,7.8){\vector(0,-1){0.8}}
\put(7.4,1.1){\vector(1,0){1}}
\put(7.4,3.1){\vector(1,0){0.8}}
\put(6.7,5.1){\vector(1,0){1.2}}
\put(6.7,6.6){\vector(1,0){1.2}}
\put(7,8.1){\vector(1,0){1}}

\put(2,1.1){\line(1,0){2}}
\put(2,1.1){\line(0,1){7}}
\put(2,8.1){\line(1,0){1.7}}
\put(12,1.1){\line(1,0){2}}
\put(14,1.1){\line(0,1){7}}
\put(14,8.1){\line(-1,0){2}}

\put(14.5,5){(2)}
\end{picture}
\end{center}
In this diagram, the region (I) commutes thanks to the axiom (1.1) in [9]; regions (II), (IV), (V) commute thanks to definitions of isomorphisms $f_{AB}$, $f_A$, $f_B$; (VI) commute thanks to the inductive supposition; the parameter commutes since $\Lc^A$ is a functorial isomorphism. Therefore, the region (III) commutes. This completes the proof.\\
After that, still with the condition $|I|=1,$ we can prove the proposition with $|J|>1$ by induction on $|J|.$\\
3. Now if $|I|>1,$ consider the diagram (1.3). In this diagram, the region (I) commute since $\Rh=id$ is a functorial isomorphism; the region (II) commutes thanks to the inductive supposition for the first component of sums, for the second component since the case $|I|=1$ has just been aproved above; the region (III) commutes thanks to the property of the isomorphism $F_{I,J}$ (see the Prop.1.7); regions (IV), (V) commute thanks to definitions of $F_{I,J}$ and $F_{I,J\times T}.$ So the parameter commutes. This completes the proof.                             \end{proof}
\newpage
\begin{center}
\begin{sideways}
\setlength{\unitlength}{1cm}
\begin{picture}(21.8,9.5)

\put(1.9,0){$(\sum\limits_{I'\times J} A_iB_j \ts \sum\limits_J A_\alpha B_j)(\sum\limits_T C_t)$}
\put(16,0){$\sum\limits_{I\times J\times T} A_i B_j C_t$}
\put(1,3.2){$(\sum\limits_{I'\times J} A_iB_j)(\sum\limits_T C_t)\ts (\sum\limits_J A_\alpha B_j)(\sum\limits_T C_t)$}
\put(15,3.2){$\sum\limits_{I'\times J\times T} A_i B_j C_t \ts \sum\limits_{J\times T} A_\alpha B_j C_t$}
\put(1,6.4){$(\sum\limits_I A_i)(\sum\limits_J B_j)(\sum\limits_T C_t)\ts A_\alpha (\sum\limits_J B_j)(\sum\limits_T C_t)$}
\put(15,6.4){$(\sum\limits_{I'} A_i)(\sum\limits_{J\times T} B_j C_t) \ts A_\alpha (\sum\limits_{J\times T} B_j C_t)$}
\put(2.5,9.5){$(\sum\limits_I A_i)(\sum\limits_J B_j)(\sum\limits_T C_t)$}
\put(15.7,9.5){$(\sum\limits_I A_i)(\sum\limits_{J\times T} B_j C_t)$}
\put(11,0.3){$F_{I\times J, T}$}
\put(4.7,1.6){$id$}
\put(4.7,4.8){$(F_{I',J}\tx id)\ts (F_{A_\alpha}\tx id)$}
\put(4.7,8){$id$}
\put(0.2,4.8){$F_{I,J}\tx id$}
\put(10,3.4){$F_{I'\times J,T}\ts F_{J,T}$}
\put(9,6.6){$(id\tx F_{J,T})\ts (id\tx F_{J,T})$}
\put(10.3,9.7){$id\tx F_{J,T}$}
\put(16.8,1.6){$id$}
\put(14.8,4.8){$F_{I',J\times T}\ts f_{A_\alpha}$}
\put(16.8,8){$id$}
\put(20.4,4.8){$F_{I,J\times T}$}
\put(4.5,2.8){\vector(0,-1){2.2}}
\put(4.5,6){\vector(0,-1){2.2}}
\put(4.5,9.1){\vector(0,-1){2.2}}
\put(17.4,2.8){\vector(0,-1){2.2}}
\put(17.4,6){\vector(0,-1){2.2}}
\put(17.4,9.1){\vector(0,-1){2.2}}
\put(7.3,0.1){\vector(1,0){8.6}}
\put(8,3.3){\vector(1,0){7.2}}
\put(8.5,6.5){\vector(1,0){6.2}}
\put(6.5,9.6){\vector(1,0){9}}
\put(0,0.1){\line(1,0){1.7}}
\put(0,0.1){\line(0,1){9.5}}
\put(0,9.6){\line(1,0){2.4}}
\put(18.5,0.1){\line(1,0){3.3}}
\put(21.8,0.1){\line(0,1){9.5}}
\put(21.8,9.6){\line(-1,0){2.8}}

\end{picture}
\end{sideways}
\end{center}
\newpage
\section{The coherence theorem for Ann-categories}
Let $\A$ be a quite strict Ann-category. Assume $X_s, s\in \Omega$ be a non-empty, limited family of objects of $\A$ and $Y$ be an expression of the family $X_s$ with operations $\tx$ and $\ts.$ With distributivity constraints $\Lh,\Rh =id,$ induced isomorphisms $\widehat{L}^A$ and isomorphisms $\Ah=id$, $g, d=id$, $a=id$, $l, r=id$, we can write $Y$ as a sum of {\it monomials} of objects of $X_s$ by using the following isomorphism
$$h: Y \rightarrow \sum\limits _I A_i$$
where $A_i \not=0$ for all $i$ if $I \not=\emptyset$ and $h$ is built from the identification, and isomorphisms $\Lh$, $\widehat{L}^A.$ A such a pair $(h,\sum\limits_I A_i)$ is called an \textit{expansion form} of $Y$. We now define \textit{a canonical expansion form} of $Y$ by induction on its length, where the length of expansion form $Y$ is the total number of times of appearances of objects $A_i$ in $Y.$ It is easily to see that any $Y$ whose length is more than 1 can be written in the form of $U\tx V$ or $U\ts V$. That implies 
\begin{dn}
The canonical expansion form 
$$h: Y \rightarrow \sum\limits _I A_i$$  
of $Y$ is defined as follows\\
1. If $Y=\sum\limits _I X_i,$ $h=id:\sum\limits _I X_i  \rightarrow \sum\limits _{I'} X_i$, where $I$ is a  subset of $\Omega$; whereas $I'$ is the set of indexes $i$ such that $X_i \not= 0$.\\ 
2. If $Y=U\tx V,$ the isomorphism $h$ is the composition
\[\begin{CD}
Y=U\tx V @>u\tx v>> (\sum\limits _J B_j)\tx(\sum\limits _T C_t)@>F_{J,T}>>\sum\limits _{J\times T} B_j
C_t
\end{CD}\]
where $(u,\sum\limits _J B_j)$, $(v,\sum\limits _T C_t)$ are, respectively, canonical expansion forms of $U$, $V$ and defined by the induction supposition.\\
3. If $Y=U_1\ts U_2,$ $h$ is the composition
\[\begin{CD}
Y=U_1\ts U_2 @>u_1\ts u_2>> (\sum\limits _{I_1} B_i)\tx(\sum\limits _{I_2} B_i)@>id>>\sum\limits _{I'} B_i
\end{CD}\]
where $u_1$, $u_2$ are defined by the inductive supposition; $I=I_1 \amalg I_2$; $i_1 < i_2$ if $i_1 \in I_1, i_2 \in I_2$ and $I'$ is the set of indexes $i\in I$ such that $B_i \not=0$.  
\end{dn}
\begin{md}
If $Y_1$, $Y_2$ are expressions of objects $X_s, s \in \Omega$ and $\varphi:Y_1 \rightarrow Y_2$ is the morphism built from morphisms $c,\Lh, \widehat{L}^A$, the identification and laws $\tx$, $\ts$ together with the composition, then they can be embedded into the following commutative diagram
\[\begin{CD}
Y_1 @>h_1>> \sum\limits_I A_i \\
@V \varphi VV @VV u  V\\
Y_2 @>h_2>> \sum\limits_I A_{\sigma(i)}
\end{CD}\]
where $(h_1,\sum\limits_I A_i)$, $(h_2,\sum\limits_I A_{\sigma(i)})$ are, respectively, canonical expansion forms of $Y_1$, $Y_2$; $\sigma$ is a permutation of the set $I$ and $u$ is an isomorphism  built from the morphism $c,$ the identification, the law $\ts$ together with the composition.
\end{md}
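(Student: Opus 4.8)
The plan is to argue by structural induction on the way the morphism $\varphi$ is assembled from the generators $c,\Lh,\La^A$, the identifications, and the two functors $\tx,\ts$ together with composition. Uniqueness of the canonical expansion form (Definition~2.1) guarantees that $h_1,h_2$ are determined by $Y_1,Y_2$ alone, so at each stage what must be produced is only the permutation $\sigma$ of $I$ together with an isomorphism $u$ built solely from $c$, the identification and $\ts$, fitting into the square. Since every monomial $A_i$ is an \emph{ordered} product of the $X_s$ and none of $c,\Lh,\La^A$ alters the internal factor order of a monomial, the two canonical sums $\sum_I A_i$ and $\sum_I A_{\sigma(i)}$ automatically carry the same multiset of monomials, so such a $\sigma$ exists; the real work is to realize the comparison map by $c$'s.

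For the base cases: if $\varphi=id$ take $\sigma=id,\ u=id$. If $\varphi=c_{U,V}\colon U\ts V\to V\ts U$, then by Definition~2.1(3) the canonical form of $U\ts V$ lists the monomials of $U$ before those of $V$ while that of $V\ts U$ reverses the two blocks; hence $\sigma$ is the block transposition and the required $u$ is the corresponding block swap, manifestly assembled from $c$ and $\ts$ (this is exactly a $u_{I,J}$-type morphism, already certified in Definition~3 to be built from $c,id$ by $\ts$). If $\varphi=\Lh$ or $\varphi=\La^A$, I expect $\sigma=id$ and $u=id$: these are precisely the isomorphisms out of which the canonical map $h$ is itself built (through the maps $f_A$ and $F_{I,J}$ of Definitions~4--5), so applying them does not change the normalized sum. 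This is checked directly from the defining diagram of $f_A$ and from part~2 of Definition~5 for $\La^A\colon A\tx 0\to 0$, together with $\Ra=id$ for the degenerate summands.

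For the inductive steps, composition and $\ts$ are routine. If $\varphi=\psi\cdot\chi$, the two induction diagrams share the (unique) middle canonical form and paste, giving $\sigma=\sigma_\psi\sigma_\chi$ and $u=u_\psi\cdot u_\chi$ after relabelling, still built from $c$. If $\varphi=\varphi_1\ts\varphi_2$, then $Y_1=U_1\ts U_2$, $Y_2=V_1\ts V_2$, and Definition~2.1(3) shows the canonical form of a direct sum is the concatenation of the canonical forms of its summands; hence $\sigma=\sigma_1\sqcup\sigma_2$ and $u=u_1\ts u_2$, once the vanishing summands are discarded consistently (using $\Ra,\La$ exactly as in Definition~2.1(1),(3)).

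The genuinely delicate case, and the one I expect to be the main obstacle, is $\varphi=\varphi_1\tx\varphi_2$ with $\varphi_1\colon U_1\to V_1$, $\varphi_2\colon U_2\to V_2$. Writing the induction data as $u_1\colon\sum_J B_j\to\sum_J B_{\sigma_1 j}$ and $u_2\colon\sum_T C_t\to\sum_T C_{\tau t}$ and using the tensor clause of Definition~2.1(2) with the functoriality of $F$, a short diagram chase reduces the claim to showing that
$$u \;=\; F_{\sigma_1 J,\tau T}\cdot(u_1\tx u_2)\cdot F_{J,T}^{-1}$$
is again built from $c$, the identification and $\ts$. Splitting $u_1\tx u_2=(u_1\tx id)\cdot(id\tx u_2)$ and conjugating each factor separately, I would reduce by a sub-induction on the structure of $u_1,u_2$ to the single case where the inner map is one elementary commutativity $c$ tensored and summed with identities. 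Here the point is that conjugation of one $c$ in a tensor factor through $F$ produces, via the functoriality of $F$ and the splitting relations of Proposition~1.7 ($F_{I,J}=\nu_I(F_{I,J_1}\ts F_{I,J_2})\Lc^A$ and $F_{J,I}=F_{J_1,I}\ts F_{J_2,I}$), a transposition of two \emph{blocks} (respectively two summands inside each block) of the large sum $\sum_{J\times T}$, and such transpositions are exactly the $\nu_I$- and $u_{I,J}$-type morphisms which Definitions~2 and 3 certify to be built from $c,id$ by $\ts$. The associativity coherence of $F$ (Proposition~1.8) is what makes these reductions compatible across the two groupings. Identifying the resulting permutation as the product $\sigma_1\times\tau$ then completes the square and closes the induction.
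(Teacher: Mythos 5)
Your proposal follows the same route as the paper: the paper's own proof is only a two-sentence sketch saying exactly this --- verify the square when $\varphi$ is one of the generating isomorphisms $c,\Lh,\La^A$ (and the strict constraints $=id$), then close up under $\ts$, $\tx$ and composition --- and your write-up is a correct and much more detailed execution of that plan, correctly identifying the $\tx$ step as the one that needs the machinery of Section 1 (functoriality of $F_{I,J}$, the splitting relations of Proposition 1.7, and the $\nu_I$-, $u_{I,J}$-type shuffles). One base case is misstated, though it does not break the argument: for $\varphi=\Lh_{A,X,Y}\colon A\tx(X\ts Y)\to (A\tx X)\ts(A\tx Y)$ with $A$ a compound expression expanding to $\sum_I A_i$ with $|I|>1$, the two canonical forms index the monomials by $I\times(J_1\amalg J_2)$ and by $(I\times J_1)\amalg(I\times J_2)$ respectively, which differ by a nontrivial shuffle; so $\sigma\neq id$ and the correct comparison map is $u=\nu_I$ (precisely the first relation of Proposition 1.7, $F_{I,J}=\nu_I\cdot(F_{I,J_1}\ts F_{I,J_2})\cdot\Lc^A$), not $u=id$ as you expected. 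Since $\nu_I$ is certified in Definition 1.2 to be built from $c$ and $id$ by $\ts$, the conclusion of the proposition still holds and the fix is immediate, but you should state this case via Proposition 1.7 rather than by inspection of $f_A$ alone.
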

\begin{proof}
We can prove the proposition in case $\varphi$ is one of isomorphisms $c,\Lh, \widehat{L}^A$, $\Ah=id$, $g= d=id$, $a=id$, $l= r=id$. Next, we can prove it easily in case $\varphi$ is the sum $\tx$ or the product $\ts$ of two morphisms of above-mentioned ones.
\end{proof}
We now state the coherence thoerem.
\begin{dl}
Let $Y_1, Y_2, ..., Y_n$ be expressions of the family $X_s, s \in \Omega$ of objects in the quite strict Ann-category $\A.$ Let $\varphi_{i,i+1}:Y_i \rightarrow Y_{i+1}$ (i=1, 2, ..., n), $\varphi_{n,1}:Y_n \rightarrow Y_1$ be isomorphisms built from morphisms $c,\Lh, \widehat{L}^A$, identification by laws $\tx$, $\ts$ and the composition. Then, the following diagram 
\begin{center}
\setlength{\unitlength}{0.5cm}
\begin{picture}(10,5)

\put(4.3,1.3){$\varphi_{n,1}$}
\put(1,4.5){$\varphi_{1,2}$}
\put(3.4,4.5){$\varphi_{2,3}$}
\put(0,4){$Y_1$}
\put(2.6,4){$Y_2$}
\put(4.8,4){$Y_3$}
\put(7.2,4.2){...}
\put(9.6,4){$Y_n$}
\put(0,1){\line(1,0){10}}
\put(10,1){\line(0,1){2.5}}
\put(0,1){\line(0,1){2.5}}
\put(0.8,4.2){\line(1,0){1.2}}
\put(3.4,4.2){\line(1,0){1}}
\put(5.8,4.2){\line(1,0){1}}
\put(8,4.2){\line(1,0){1}}

\put(19,2){\emph{(3)}}
\end{picture}
\end{center}
commutes.
\end{dl}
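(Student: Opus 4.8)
The plan is to reduce the commutativity of the polygon (3) to coherence of the purely additive structure $(\A,\ts,c)$ by pushing the whole diagram onto canonical sums by means of Proposition 2.2. Write $\Phi=\varphi_{n,1}\cdot\varphi_{n-1,n}\cdots\varphi_{1,2}\colon Y_1\rightarrow Y_1$ for the composite around the cycle; since for a closed polygon commutativity is equivalent to the single equality $\Phi=id_{Y_1}$, it suffices to prove this identity. Observe that $\Phi$ is again a morphism assembled from $c$, $\Lh$, $\La^A$, the identification and the laws $\tx,\ts$ by composition, so that Proposition 2.2 is applicable to $\Phi$ itself.

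First I would apply Proposition 2.2 to $\Phi$. Letting $(h_1,\sum\limits_I A_i)$ be the canonical expansion form of $Y_1$ given by Definition 2.1, this produces a commutative square
\[\begin{CD}
Y_1 @>h_1>> \sum\limits_I A_i \\
@V\Phi VV @VVuV\\
Y_1 @>h_1>> \sum\limits_I A_{\sigma(i)}
\end{CD}\]
in which $\sigma$ is a permutation of $I$ and $u$ is built from $c$, the identification and $\ts$ alone. Since the source and target of $\Phi$ coincide and the canonical expansion form is uniquely determined by Definition 2.1, the lower horizontal arrow is again $h_1$ and the object $\sum\limits_I A_{\sigma(i)}$ is the canonical sum $\sum\limits_I A_i$ itself. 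Reading the permutation off from this coincidence of source and target, I would conclude that $\sigma=id$, so that $u$ is an automorphism of $\sum\limits_I A_i$ inducing the identity permutation of its summands.

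It then remains to prove that such a $u$ is the identity, and for this I would invoke coherence of the additive structure. Because $\A$ is quite strict, the constraints $a$, $l$, $r$, $g$, $d$ are all identities and the sole nontrivial symmetry of $\ts$ is $c$; thus $(\A,\ts,c)$ is a symmetric strict monoidal category. By Mac Lane's coherence theorem for symmetric monoidal categories [7], any morphism of such a category that is built from $c$, identities and $\ts$ and induces the identity permutation of its factors equals the identity; applied to $u$ this gives $u=id$. Transporting back through the isomorphism $h_1$ in the square above then yields $\Phi=h_1^{-1}\cdot u\cdot h_1=id_{Y_1}$, which is exactly the commutativity of (3).

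The step I expect to be the main obstacle is the passage that replaces all the original morphisms by the single additive automorphism $u$ and identifies its permutation as trivial. One must check that Proposition 2.2 indeed applies to the entire composite $\Phi$ and not merely to the generating isomorphisms, that the resulting $u$ contains no occurrence of $\tx$, $\Lh$ or $\La^A$ so that it lives inside $(\A,\ts,c)$, and, most delicately, that the coincidence of the source and target of $\Phi$ genuinely forces $\sigma=id$. This last point is subtle precisely when some of the monomials $A_i$ coincide; its justification---effectively treating the generators $X_s$ as distinct formal symbols so that the canonical form determines the permutation---is the crux on which the whole argument rests. Once this reduction is secured, the appeal to Mac Lane's symmetric monoidal coherence is immediate.
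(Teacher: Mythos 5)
Your proposal is correct and follows essentially the same route as the paper: the paper applies Proposition 2.2 to each edge $\varphi_{i,i+1}$ separately, obtaining a ladder of commuting squares whose inner polygon of morphisms $u_{i,i+1}$ (built from $c$, $id$ and the law $\ts$) commutes by Mac Lane's symmetric monoidal coherence --- which is just the edge-by-edge version of your single square for the composite $\Phi$. The delicate point you flag, namely that the underlying permutation of the resulting additive loop must be the identity (which requires treating the $X_s$ as distinct formal generators rather than deducing $\sigma=id$ from mere coincidence of source and target objects), is a genuine subtlety that the paper itself passes over silently.
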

\begin{proof}
Let $(h_i,\sum\limits_J A_{\sigma_i(j)})$ denote the canonical expansion form of $Y_i$. Consider the following diagram
\begin{center}
\setlength{\unitlength}{1cm}
\begin{picture}(15,6)
\put(2,2){$\sum\limits_J A_{\sigma_1(j)}$}
\put(4.5,2){$\sum\limits_J A_{\sigma_2(j)}$}
\put(7.3,2){...}
\put(8.8,2){$\sum\limits_J A_{\sigma_{n-1}(j)}$}
\put(11.8,2){$\sum\limits_J A_{\sigma_n(j)}$}
\put(2.4,4){$Y_1$}
\put(4.8,4){$Y_2$}
\put(7.1,4){...}
\put(9.5,4){$Y_{n-1}$}
\put(12.1,4){$Y_n$}

\put(2,3){$h_1$}
\put(3.6,2.3){$u_{1,2}$}
\put(3.4,4.3){$\varphi_{1,2}$}
\put(5,3){$h_2$}
\put(7.3,0.8){$u_{n,1}$}
\put(7.3,5.2){$\varphi_{n,1}$}
\put(9,3){$h_{n-1}$}
\put(12.3,3){$h_n$}
\put(10.6,2.3){$u_{n-1,n}$}
\put(10.6,4.3){$\varphi_{n-1,n}$}

\put(2.5,5.5){\vector(0,-1){1}}
\put(2.5,3.8){\vector(0,-1){1.2}}
\put(2.5,0.5){\vector(0,1){1.2}}
\put(4.9,3.8){\vector(0,-1){1.2}}
\put(9.8,3.8){\vector(0,-1){1.2}}
\put(12.2,3.8){\vector(0,-1){1.2}}
\put(3.5,2.1){\vector(1,0){0.8}}
\put(6,2.1){\vector(1,0){1}}
\put(7.9,2.1){\vector(1,0){0.8}}
\put(10.6,2.1){\vector(1,0){1}}
\put(2.9,4.1){\vector(1,0){1.5}}
\put(5.3,4.1){\vector(1,0){1.5}}
\put(7.7,4.1){\vector(1,0){1.5}}
\put(10.4,4.1){\vector(1,0){1.5}}

\put(2.5,0.5){\line(1,0){9.6}}
\put(12.1,0.5){\line(0,1){1}}
\put(2.5,5.5){\line(1,0){9.6}}
\put(12.1,5.5){\line(0,-1){1}}
\put(14.3,3){(4)}
\end{picture}
\end{center}
Then we have the diagram (2.2), where morphisms $u_{i,i+1}\ (i=1,2,...,n-1),\ u_{n,1}$ make regions (1), ..., (n) and the parameter commute according to Prop.2.2. They are built from $e,id$ and the laws $\tx,$ so according to the coherence theorem for a symmetric monoidal category, the region (b) commutes. Therefore, the region (a) commutes. This completes the proof.
\end{proof}
\section{The general case}
\noindent In this last section, we assume that $\A$, $\A'$ are Ann-categories with, respectively, constraints
\begin{align*}
(\Ah, c, (0, g, d), a, (1, l, r), \Lh,\Rh)\\
(\Ah', c', (0', g', d'), a', (1', l', r'), \Lh',\Rh')
\end{align*}
and $(F,\Breve{F},\Fm):\A \rightarrow \A'$ is a faithful Ann-functor such that the pair $(F,\Fm)$ is compatible with the unitivity constraints $(1, l, r)$, $(1', l', r')$. In addition, let $\Fh: F1\rightarrow 1'$ denote the isomorphism induced by the above compatibility.\\
Let $(X_i),i \in I$ be a non-empty, limited family of objects of $\A,$ and $Y=\Hh(X_i)$ be a certain expression of the family $(X_i),i \in I$. Then, the expression $Y'=\Hh(X_i')$ is called the {\it canonical image} of 
$Y=\Hh(X_i)$ under $F$ if
\begin{eqnarray*}
X_i'&=0'  \quad &\text{when } X_i=0 \\
X_i'&=1'  \quad &\text{when } X_i=1 \\
X_i'&=FX_i  \quad &\text{otherwise }  
\end{eqnarray*}
From this notion we give the following definition
\begin{dn}
We define a canonical isomorphism
$$f:FY=F(\Hh(X_i)) \rightarrow  F(\Hh(X_i')) $$
by induction on $Y$'s length as follows\\
1. If $Y$'s length is equal to 1, $Y=X_\alpha$ then
\begin{eqnarray*}
&f=\widehat{F}:F0 \rightarrow 0'   \quad &\text{in case  } X_\alpha=0 \\
&f=\Fh :F1 \rightarrow 1'   \quad &\text{in case  } X_\alpha=1 \\
&f =id: FX_\alpha \rightarrow FX_\alpha &\text{in other cases  }  
\end{eqnarray*}
2.  If $Y$'s length is more than 1, $Y=U_1 \tx U_2$ or $Y=U_1 \ts U_2.$ Then, the isomorphism $f$ is, respectively, the following compositions
\[\begin{CD}
FY=F(U_1 \tx U_2) @>\Fm>> FU_1\tx FU_2 @>f_1 \tx f_2 >> \Hh_1(X_i')\tx\Hh_2(X_i') 
\end{CD}\]
\[\begin{CD}
FY=F(U_1 \ts U_2) @>\breve{F}>> FU_1\ts FU_2 @>f_1 \ts f_2 >> \Hh_1(X_i')\ts\Hh_2(X_i') 
\end{CD}\]
where $f_1$, $f_2$ are canonical isomorphisms determined by inductive supposition.
\end {dn}
\begin{md}
Suppose that $\varphi:Y_1 \rightarrow Y_2$ is a morphism built from isomorphisms $\Ah,$ $c,$ $g,$ $d,$ $a,$ $l,$ $r,$ $\Lh,$ $\Rh,$ $\widehat{L}^A,$ $\widehat{R}^A$ in the Ann-category $\A$. Then, $\varphi$ can be embedded into the following commutative diagram
\[\begin{CD}
FY_1 @>f_1>> Y'_1 \\
@V F(\varphi)VV @VV\varphi'  V\\
FY_2 @>f_2>> Y'_2
\end{CD}\]
where $f_i$ are canonical isomorphisms corresponding to canonical images $Y'_i$ of $Y_i$ (i=1, 2), whereas $\varphi'$ is a morphism built from isomorphisms $\Ah',c',g',d',a',l',r',\Lh',\Rh', \widehat{L}^{A'},\widehat{R}^{A'}$ in the Ann-category $\A'.$
\end{md}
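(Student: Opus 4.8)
The plan is to argue by induction on the way $\varphi$ is built up, in the same structural-induction style used for Prop.2.2. Since by hypothesis $\varphi$ is assembled from the elementary isomorphisms $\Ah, c, g, d, a, l, r, \Lh, \Rh, \widehat{L}^A, \widehat{R}^A$ (and identities) by the two laws $\tx, \ts$ and by composition, it is enough to do two things: (i) produce the commutative square and the morphism $\varphi'$ when $\varphi$ is a single one of these generators, and (ii) check that the class of morphisms admitting such a square is closed under $\tx$, $\ts$ and composition. The morphism $\varphi'$ will always be built from the primed constraints $\Ah', c', g', d', a', l', r', \Lh', \Rh', \widehat{L}^{A'}, \widehat{R}^{A'}$ of $\A'$ by the same recipe that produced $\varphi$.

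For the closure step, composition is handled by vertical pasting: if $\varphi=\varphi_2\varphi_1$ with $\varphi_1\colon Y_1\to Y_0$ and $\varphi_2\colon Y_0\to Y_2$, the inductive squares for $\varphi_1$ and $\varphi_2$ share the common canonical isomorphism $f_0\colon FY_0\to Y_0'$ along their common edge, so stacking them gives the square for $\varphi$ with $\varphi'=\varphi_2'\varphi_1'$. For the law $\tx$, when $\varphi=\psi\tx\chi$ the definition of the canonical isomorphism (Def.3.1) expresses each $f_i$ as $(f_i^{(1)}\tx f_i^{(2)})\circ\Fm$; the required square then splits into the naturality square of $\Fm$ sitting on top of the $\tx$-product of the inductive squares for $\psi$ and $\chi$, and we set $\varphi'=\psi'\tx\chi'$. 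The case $\varphi=\psi\ts\chi$ is identical with $\Breve{F}$ replacing $\Fm$, using the naturality of $\Breve{F}$.

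The substance of the proof is the base cases. For each pure constraint I take $\varphi'$ to be the correspondingly named constraint of $\A'$, and the commutativity of the square is then precisely one of the defining compatibility axioms of the Ann-functor $(F,\Breve{F},\Fm)$: the axioms expressing that $(F,\Breve{F})$ is a symmetric monoidal functor for $\ts$ give the squares for $\Ah, c$ and, together with $\Fa\colon F0\to 0'$, for $g, d$; the axioms for the monoidal functor $(F,\Fm)$ for $\tx$, together with the assumed compatibility of $(F,\Fm)$ with the unitivity constraints and the induced $\Fh\colon F1\to 1'$, give the squares for $a, l, r$; and the distributivity-compatibility axioms of the Ann-functor, which couple $\Fm$ with $\Breve{F}$, give the squares for $\Lh$ and $\Rh$.

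The main obstacle I expect is the two families of induced isomorphisms $\widehat{L}^A$ and $\widehat{R}^A$, together with the zero-object isomorphism $\Fa$ entering through $g, d$. Unlike the genuine constraints, $\widehat{L}^A$ and $\widehat{R}^A$ are not part of the Ann-functor data but are derived in [9] from the zero object and the distributivity constraints, so their squares are not an axiom and must be assembled. For these I would unfold the defining diagram of $\widehat{L}^A$ (respectively $\widehat{R}^A$) from [9] and paste together the $\tx$-monoidal square, the naturality of $\Breve{F}$, and the interaction of $\Fm$, $\Breve{F}$ with $\Fa\colon F0\to 0'$; verifying that these elementary pieces fit into one commuting diagram is the only genuinely diagram-chasing point. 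Once every generator is verified, steps (i) and (ii) close the induction.
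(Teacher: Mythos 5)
Your proposal is correct and follows essentially the same route as the paper: the paper's proof is a one-line reference to the structural induction of Proposition 2.2 (verify the square for each generating isomorphism, then close under $\tx$, $\ts$ and composition), which is exactly the scheme you carry out. Your elaboration of the base cases via the Ann-functor compatibility axioms and your flagging of the derived isomorphisms $\widehat{L}^A$, $\widehat{R}^A$ as the only nontrivial diagram chase simply fill in details the paper leaves implicit.
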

\begin{proof}
The proof is completely similar to the one of Proposition 2.2.
\end{proof}
Following is the main result of this paper.
\begin{dl}
Let $Y_1, Y_2,..., Y_n$ be expressions of the limited family of objects $(X_i)_{i \in I}$ of an Ann-category $\A.$ Let $\varphi_{i,i+1}:Y_i \rightarrow Y_{i+1}$ (i=1, 2, ..., n-1), $\varphi_{n,1}:Y_n \rightarrow Y_1$ be isomorphisms built from the isomorphisms $\Ah,c,g,d,a,l,r,\Lh,\Rh, \widehat{L}^A,\widehat{R}^A$, the identification and laws $\tx,\ts.$ Then, the diagram (2.1) commutes.
\end{dl}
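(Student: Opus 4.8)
The plan is to reduce the statement to the coherence theorem already established for quite strict Ann-categories (Theorem 2.3), following the embedding strategy announced in the introduction. The crucial structural input, which underlies the whole approach, is that the arbitrary Ann-category $\A$ admits a faithful ``embedding'' into a quite strict Ann-category; accordingly I would fix a faithful Ann-functor $(F,\Breve{F},\Fm):\A \rightarrow \A'$ with $\A'$ \emph{quite strict}, and for each expression $Y_i$ let $Y_i'$ be its canonical image together with the canonical isomorphism $f_i:FY_i \rightarrow Y_i'$ of Definition 3.1. The aim is to transport the cyclic diagram (2.1) across $F$, settle its commutativity inside $\A'$, and then pull the conclusion back using faithfulness.

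First I would apply $F$ to the whole of (2.1), obtaining morphisms $F(\varphi_{i,i+1}):FY_i \rightarrow FY_{i+1}$ and $F(\varphi_{n,1}):FY_n \rightarrow FY_1$. Since each $\varphi_{i,i+1}$ is built from the isomorphisms $\Ah,c,g,d,a,l,r,\Lh,\Rh,\widehat{L}^A,\widehat{R}^A$ and the laws $\tx,\ts$, Proposition 3.2 embeds it into a commutative square
\[\begin{CD}
FY_i @>f_i>> Y_i' \\
@V F(\varphi_{i,i+1}) VV @VV \varphi_{i,i+1}' V\\
FY_{i+1} @>f_{i+1}>> Y_{i+1}'
\end{CD}\]
in which $\varphi_{i,i+1}'$ is built from $\Ah',c',g',d',a',l',r',\Lh',\Rh',\widehat{L}^{A'},\widehat{R}^{A'}$. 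Because the canonical image $Y_i'$ depends only on the expression $Y_i$ and not on the particular morphism, consecutive squares share their vertical edge $f_i$ and glue into a prism whose top face is the $F$-image of (2.1) and whose bottom face is the cyclic diagram formed by the $\varphi_{i,i+1}'$ on the objects $Y_i'$.

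Next I would apply Theorem 2.3 to the bottom face. As $\A'$ is quite strict, all its constraints except $c'$ and $\Lh'$ are identities, so each $\varphi_{i,i+1}'$ is effectively built only from $c',\Lh',\widehat{L}^{A'}$ and identifications, which is exactly the scope of Theorem 2.3; hence the composite around the bottom cycle is $\mathrm{id}_{Y_1'}$. Reading each square as $F(\varphi_{i,i+1})=f_{i+1}^{-1}\circ\varphi_{i,i+1}'\circ f_i$ and composing around the cycle, the inner factors $f_i,f_i^{-1}$ telescope, leaving
\[
F(\varphi_{n,1}\circ\cdots\circ\varphi_{1,2})
=f_1^{-1}\circ(\varphi_{n,1}'\circ\cdots\circ\varphi_{1,2}')\circ f_1
=f_1^{-1}\circ f_1=\mathrm{id}_{FY_1}=F(\mathrm{id}_{Y_1}).
\]
Faithfulness of $F$ then forces $\varphi_{n,1}\circ\cdots\circ\varphi_{1,2}=\mathrm{id}_{Y_1}$, which is precisely the commutativity of (2.1).

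The step I expect to require the most care is not the telescoping, which is formal, but the construction making the prism well defined together with the reduction of the $\varphi_{i,i+1}'$ to the generators handled by Theorem 2.3. Concretely, one must confirm that the canonical image $Y_i'$ attached to $Y_i$ is the same whether $Y_i$ is read as target of $\varphi_{i-1,i}$ or source of $\varphi_{i,i+1}$, and that the instances of $\Ah',a',g',d',l',r',\Rh',\widehat{R}^{A'}$ appearing in $\varphi_{i,i+1}'$ genuinely collapse to identities in the quite strict target. Both follow once the canonical image is taken intrinsically from the expression as in Definition 3.1; the only other essential ingredient is the existence of the faithful Ann-functor from $\A$ into a quite strict Ann-category, on which the entire reduction rests.
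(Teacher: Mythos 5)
Your proposal is correct and follows essentially the same route as the paper: embed $\A$ into a quite strict $\A'$ via the faithful Ann-functor of Theorem 2.4 in [9], use Proposition 3.2 to transport each $\varphi_{i,i+1}$ to a morphism $\varphi'_{i,i+1}$ built from $c',\Lh',\widehat{L}^{A'}$ and identities, apply Theorem 2.3 to the resulting cycle in $\A'$, and conclude by faithfulness. Your explicit telescoping of the $f_i$ is just a more detailed rendering of the paper's diagram-gluing argument.
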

\begin{proof}
From the theorem  2.4 [9], the Ann-category $\A$ can be faithfully embedded into a quite strict Ann-category $\A'$ by the faithful Ann-functor $(F,\Fc,\Fm)$. Moreover, $(F,\Fm)$ is compatible with the unitivity constraints. In order to prove the diagram (2.1) commutative, we consider its image under $F$\\
\begin{center}
\setlength{\unitlength}{1cm}
\begin{picture}(8,6)
\put(0,2.2){$Y'_1$}
\put(2,2.2){$Y'_2$}
\put(3.4,2.2){$...$}
\put(4.8,2.2){$Y'_{n-1}$}
\put(7.7,2.2){$Y_n$}
\put(0,4){$FY_1$}
\put(1.8,4){$FY_2$}
\put(3.4,4){...}
\put(4.8,4){$FY_{n-1}$}
\put(7.6,4){$FY_n$}
\put(-0.2,3.1){$f_1$}
\put(1.5,3.1){$f_2$}
\put(4.4,3.1){$f_{n-1}$}
\put(7.2,3.1){$f_n$}
\put(0.9,2.5){$\varphi'_{1,2}$}
\put(0.6,4.3){$F(\varphi_{1,2})$}
\put(6.2,2.5){$\varphi'_{n-1,n}$}
\put(5.8,4.3){$F(\varphi_{n-1,n})$}
\put(3.5,0.7){$\varphi'_{n,1}$}
\put(3.2,5.1){$F(\varphi_{n,1})$}

\put(0.2,5.5){\vector(0,-1){1}}
\put(0.2,3.8){\vector(0,-1){1.2}}
\put(0.2,0.5){\vector(0,1){1.5}}
\put(2.1,3.8){\vector(0,-1){1.2}}
\put(5.2,3.8){\vector(0,-1){1.2}}
\put(7.8,3.8){\vector(0,-1){1.2}}
\put(0.6,2.3){\vector(1,0){1.2}}
\put(2.5,2.3){\vector(1,0){0.7}}
\put(3.9,2.3){\vector(1,0){0.7}}
\put(5.6,2.3){\vector(1,0){2}}
\put(0.7,4.1){\vector(1,0){1}}
\put(2.4,4.1){\vector(1,0){0.8}}
\put(3.9,4.1){\vector(1,0){0.6}}
\put(5.9,4.1){\vector(1,0){1.5}}

\put(0.2,0.5){\line(1,0){7.6}}
\put(7.8,0.5){\line(0,1){1.5}}
\put(0.2,5.5){\line(1,0){7.6}}
\put(7.8,5.5){\line(0,-1){1}}

\end{picture}
\end{center}
where $f_i$ are canonical isomorphisms, whereas $\varphi_{i,i+1}$ ($i = 1, 2, ..., n - 1$), $\varphi_{n,1}$ are morphisms making regions from (1) to (n) and the parameter commute according to the Prop.3.2.  These morphisms are built from isomorphisms $c', \Lh',\widehat{L}^{A'},id$ and by laws $\tx,\ \ts.$ Applying Theorem 2.3, the region (b) commutes. This implies that the region (a) commutes. This completes the proof.
\end{proof}
\noindent \textit{\bf Remark. }The coherence theorem  can be stated in another way as follows: Between two objects of the Ann-category $\A,$ there exists no more than one morphism built from morphisms $\Ah,c,g,d,a,l,r,\Lh,\Rh, \widehat{L}^A,\widehat{R}^A$ and laws $\tx$, $\ts$. 
\begin{center}
\setlength{\unitlength}{1cm}
\begin{picture}(15,10)
{\scriptsize
\put(1.7,1){$(\sum\limits_{I'\times J} A_iB_j \ts \sum\limits_J A_\alpha B_j)(\sum\limits_T C_t)$}
\put(9,1){$\sum\limits_{I\times J\times T} A_i B_j C_t$}
\put(1.5,3.5){$(\sum\limits_{I'\times J} A_iB_j)(\sum\limits_T C_t)\ts (\sum\limits_J A_\alpha B_j)(\sum\limits_T C_t)$}
\put(8,3.5){$\sum\limits_{I'\times J\times T} A_i B_j C_t \ts \sum\limits_{J\times T} A_\alpha B_j C_t$}
\put(1.5,6.5){$(\sum\limits_I A_i)(\sum\limits_J B_j)(\sum\limits_T C_t)\ts A_\alpha (\sum\limits_J B_j)(\sum\limits_T C_t)$}
\put(8,6.5){$(\sum\limits_{I'} A_i)(\sum\limits_{J\times T} B_j C_t) \ts A_\alpha (\sum\limits_{J\times T} B_j C_t)$}
\put(2,9){$(\sum\limits_I A_i)(\sum\limits_J B_j)(\sum\limits_T C_t)$}
\put(9,9){$(\sum\limits_I A_i)(\sum\limits_{J\times T} B_j C_t)$}

\put(0.3,5){$F_{I,J}\tx id$}
\put(3.4,2.2){$id$}
\put(3.4,5.3){$(F_{I',J}\tx id)\ts (F_{A_\alpha}\tx id)$}
\put(3.4,7.8){$id$}
\put(6.5,1.4){$F_{I\times J, T}$}
\put(6.5,3.9){$ F_{I'\times J,T}\ts F_{J,T}$}
\put(5.5,6.9){$(id\tx F_{J,T})\ts (id\tx F_{J,T})$}
\put(6,9.3){$id\tx F_{J,T}$}
\put(10.2,2.2){$id$}
\put(10.2,5.3){$F_{I',J\times T}\ts f_{A_\alpha}$}
\put(10.2,7.8){$id$}
\put(13.8,5){$F_{I, J\times T}$}

\put(3.2,3){\vector(0,-1){1.5}}
\put(3.2,6.2){\vector(0,-1){2}}
\put(3.2,8.6){\vector(0,-1){1.7}}
\put(10,3){\vector(0,-1){1.5}}
\put(10,6.2){\vector(0,-1){2}}
\put(10,8.6){\vector(0,-1){1.7}}
\put(6,1.1){\vector(1,0){2.5}}
\put(6.7,3.6){\vector(1,0){1.5}}
\put(7,6.6){\vector(1,0){0.8}}
\put(5,9.1){\vector(1,0){3.6}}

\put(0,1.1){\line(1,0){1.6}}
\put(0,1.1){\line(0,1){8}}
\put(0,9.1){\line(1,0){1.8}}
\put(15,9.1){\line(-1,0){3.2}}
\put(15,1.1){\line(0,1){8}}
\put(15,1.1){\line(-1,0){3.8}}
}
\end{picture}
\end{center}
\newpage
\begin{center}

\end{center}
Math. Dept.,Hanoi University of Education.\\
E-mail address: nguyenquang272002@gmail.com
\end{document}